\title[An alternative proof of the Faltings--Elkies bound]{An alternative proof of the Faltings--Elkies bound}
\author{Robert Wilms}
\address{Robert Wilms\\
	Laboratoire de Mathématiques Nicolas Oresme\\
	Université de Caen--Normandie BP 5186\\
	14032 Caen Cedex\\
	France}
\email{robert.wilms@unicaen.fr}
\thanks{The author gratefully acknowledges support from the Swiss National Science Foundation grant ``Diophantine Equations: Special Points, Integrality, and Beyond'' (n$^\circ$ 200020\_184623)}
\subjclass[2020]{14G40}
\date{\today}
\begin{document}
	\numberwithin{equation}{section}
	\newtheorem{Def}{Definition}
	\numberwithin{Def}{section}
	\newtheorem{Rem}[Def]{Remark}
	\newtheorem{Lem}[Def]{Lemma}
	\newtheorem{Que}[Def]{Question}
	\newtheorem{Cor}[Def]{Corollary}
	\newtheorem{Exam}[Def]{Example}
	\newtheorem{Thm}[Def]{Theorem}
	\newtheorem*{clm}{Claim}
	\newtheorem{Pro}[Def]{Proposition}
	\newcommand\gf[2]{\genfrac{}{}{0pt}{}{#1}{#2}}
	
\begin{abstract}
We give an alternative proof of the Faltings--Elkies bound on the average value of the Arakelov--Green function in pairs of a given set of $n$ points on a Riemann surface, which grows asymptotically like $O((\log n)/n)$.
Our result is effective in terms of bounds of the Arakelov--Green function with respect to a given covering by local coordinates.
\end{abstract}
\maketitle
\section{Introduction}
In 1974 Arakelov \cite{Ara74} introduced an intersection theory on arithmetic surfaces motivated by the applications of intersection theory on algebraic surfaces in the proofs of function field analogues of arithmetic conjectures, for example of the Mordell conjecture by Parshin \cite{Par68}. In particular, he defined the now called Arakelov--Green function $G(\cdot,\cdot)$ on the self-product $X^2$ of a compact and connected Riemann surface $X$ as the archimedean contribution for the intersection pairing. Faltings \cite{Fal84} largely expanded this theory by proving analogues of the classical Riemann--Roch formula, the Hodge index theorem, the Noether formula and the non-negativity of the self-intersection number $\omega^2$ of the relative dualizing sheaf. 

In his proof of the non-negativity of $\omega^2$, Faltings showed that the average value of the logarithm of the Arakelov--Green function $g(\cdot,\cdot)=\log G(\cdot, \cdot)$ in the pairs of different points of a given set of $n$ different points on a Riemann surface tends to a non-positive value for increasing $n$. Elkies \cite[VI, §5]{Lan88} improved this result by showing that this average is asymptotically bounded by $(\log n)/(8\pi n)$. For $g=1$ the constants in Elkies' result can be made explicit in terms of the $j$-invariant, as worked out by Baker and Petsche \cite[Proposition 4]{BP05}. The aim of this note is to give an alternative proof of Elkies' result and to make sense of the involved constants.

We write $\mathbb{D}_r=\{z\in\mathbb{C}~|~|z|<r\}$ for the open disc around the origin in $\mathbb{C}$ of radius $r$. For any compact and connected Riemann surface of genus $g\ge 1$ we write $\nu$ for the canonical translation-invariant $(1,1)$-form on its Jacobian $J=\mathrm{Pic}^0(X)$, and we set $\mu=\frac{1}{g}I^*\nu$, where $I\colon X\to J$ denotes the Abel--Jacobi embedding corresponding to any base point. Let $\|\theta\|\colon J\to \mathbb{R}_{\ge 0}$ denote the norm of the Riemann $\theta$-function on $J$ and set $H(X)=\int_{J}\log\|\theta\| \tfrac{\nu^g}{g!}$.
We refer to Section \ref{sec_riemannsurfaces} for more details on these definitions. Our main result is the following theorem.
\begin{Thm}\label{thm_elkies}
	Let $X$ be a compact and connected Riemann surface of genus $g\ge 1$. Further, let $\{z_j\colon U_j\to \mathbb{C}\}_{j\in J}$ be a family of local coordinate functions on $X$. We assume that there exist positive numbers $r_2>r_1>0$ with $r_2-r_1\le 1$ such that $X=\bigcup_{j=1}^m z_j^{-1}(\mathbb{D}_{r_1})$ and $z_j(U_j)=\mathbb{D}_{r_2}$ for all $j\in\{1,\dots,m\}$.
	We set
	$$C_0=\sup_{j\in J}\sup_{P,Q\in U_j, P\neq Q}\left|\log|z_j(P)-z_j(Q)|-g(P,Q)\right|,$$
	where $g(\cdot,\cdot)$ denotes the logarithm of the Arakelov--Green function of $X$.
	Let $C_1>0$ and $C_2>0$ be positive real numbers such that
	$$C_2idz_jd\overline{z}_j\le\mu\le C_1idz_j d\overline{z}_j$$
	on $U_j$ for every $j\in J$ and $C_2\le \frac{e^{4C_0}}{2\pi}$.
	For any $n\ge 1$ and any pairwise different points $x_1,\dots,x_n\in X$ it holds
	\begin{align*}
		&\sum_{j<k}^n g(x_j,x_k)\\
		&\le n\left(\left(\frac{C_1e^{4C_0}}{2C_2}+1\right)(\log n+1)+\tfrac{3}{2}g\log g+3-\tfrac{g+1}{g}H(X)+C_0-\log(r_2-r_1)\right).
	\end{align*}
\end{Thm}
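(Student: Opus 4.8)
The plan is to exhibit $\sum_{j<k}^n g(x_j,x_k)$ inside the logarithm of an interpolation determinant of holomorphic sections of a line bundle of degree $\approx n$, to isolate it there by the classical theta-function/prime-form expression of such determinants, and then to bound the determinant from above by Hadamard's inequality together with an effective chart-by-chart sup-norm estimate for sections; averaging the resulting theta value over the Jacobian introduces $H(X)$. Concretely, fix a base point $O_0\in X$, set $d=n+g$ and $\mathcal L=\mathcal O(d\,O_0)$, so that $N:=h^0(X,\mathcal L)=n+1$, and let $\sigma_1,\dots,\sigma_N$ be a basis of $H^0(X,\mathcal L)$ that is orthonormal for the $L^2$-product attached to $\mu$ and to the metric on $\mathcal L$ of curvature $d\,\mu$. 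For an auxiliary point $y\in X$ let $W(y)=\det\bigl(\sigma_a(p_b)\bigr)_{1\le a,b\le N}$ with $(p_1,\dots,p_N)=(x_1,\dots,x_n,y)$, the section values taken in fixed local frames. By the classical identity expressing such a determinant of section values through a theta value and a product of prime forms (a form of Jacobi's formula for Wronskian-type determinants), and using the prime form's defining property $\log\|E(P,Q)\|=g(P,Q)$ together with the bound $\bigl|\log|z_j(P)-z_j(Q)|-g(P,Q)\bigr|\le C_0$, one obtains
\[\log|W(y)|=\log\|\theta\|\bigl(v-I(y)\bigr)+\sum_{j<k}^n g(x_j,x_k)+\sum_{j=1}^n g(x_j,y)+\log\gamma(\mathcal L)+R(y),\]
where $v\in J$ depends only on $\mathcal L$, $O_0$ and $x_1,\dots,x_n$, the constant $\gamma(\mathcal L)$ depends only on $\mathcal L$, the basis and the frames, and $R(y)$ aggregates the frame corrections, which the definition of $C_0$ and the sizes of the discs bound by roughly $C_0-\log(r_2-r_1)$ per evaluation point. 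The gain is that the quantity $\sum_{j<k}^n g(x_j,x_k)$ to be estimated is now isolated.

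Next I would integrate over $y\in X$ against $\mu$. The defining normalisation $\int_X g(P,\cdot)\,\mu=0$ annihilates $\int_X\sum_j g(x_j,y)\,\mu(y)$. Since $\nu$ is translation invariant, the class of $I(X)$ in $J$ is Poincaré dual to $\nu^{g-1}/(g-1)!$ (Poincaré's formula), $\mu=\tfrac1g I^*\nu$, and $\int_J\nu^g/g!=1$, one gets
\[\int_X\log\|\theta\|\bigl(v-I(y)\bigr)\,\mu(y)=\frac1g\int_X\log\|\theta\|\bigl(v-I(y)\bigr)\,I^*\nu=\int_J\log\|\theta\|\,\frac{\nu^g}{g!}=H(X).\]
A parallel computation identifies $\log\gamma(\mathcal L)$ with an integral of $\log\|\theta\|$ over $J$ (again $H(X)$, now with coefficient $\tfrac1g$) plus genus terms, so that altogether one arrives at a lower bound of the form
\[\sum_{j<k}^n g(x_j,x_k)+\tfrac{g+1}{g}H(X)\le \int_X\log|W(y)|\,\mu(y)+E_n,\]
where $E_n$ gathers the $C_0$- and $\log(r_2-r_1)$-contributions together with genus terms of size $O(g\log g)$; the $\tfrac32 g\log g$ in the statement will come from Stirling's formula applied to factorials such as $(n+g)!$, $g!$ and $g^g$ occurring in $\gamma(\mathcal L)$ and in the orthonormalisation.

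It remains to bound $\log|W(y)|$ from above, uniformly in $y$. Hadamard's inequality gives $|W(y)|\le\prod_{b=1}^{N}\bigl(\sum_a|\sigma_a(p_b)|^2\bigr)^{1/2}$, and after passing from the local frames back to the metric of $\mathcal L$ --- once more a $C_0$-controlled correction --- each factor is at most $\bigl(\sup_X K_{\mathcal L}\bigr)^{1/2}$, where $K_{\mathcal L}$ denotes the Bergman kernel of $\mathcal L$. The effective heart of the argument is then a bound on $\sup_X K_{\mathcal L}$, equivalently on the sup-norm of an $L^2(\mu)$-normalised holomorphic section of a line bundle of degree $n+g$, purely in terms of the covering data: on each chart such a section is a holomorphic function $f$ on $\mathbb D_{r_2}$, to which one applies the sub-mean-value inequality on a sub-disc $\mathbb D_\rho\subset\mathbb D_{r_1}$, bounds the resulting area integral via $C_2\,i\,dz_j\,d\overline z_j\le\mu$, and controls the passage between the metric of $\mathcal L$ and the trivialisation through the factor $e^{4C_0}$. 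The factor $1/(C_2\rho^2)$, after optimising $\rho$ --- this is where the hypothesis $C_2\le e^{4C_0}/(2\pi)$ is used, to keep $\rho$ admissible --- and using $\mu\le C_1\,i\,dz_j\,d\overline z_j$, contributes the coefficient $\tfrac{C_1e^{4C_0}}{2C_2}$ of $\log n+1$, while the Hadamard determinant size $N^{N/2}$ contributes the remaining $+1$, producing the factor $\bigl(\tfrac{C_1e^{4C_0}}{2C_2}+1\bigr)(\log n+1)$. Combining this upper bound with the lower bound above and gathering constants gives the asserted inequality.

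I expect the main obstacle to be precisely this last step: obtaining the effective sup-norm (Bergman kernel) estimate for a line bundle of degree $\approx n$ that is uniform in $n$ and expressed solely through the covering $\{z_j\}$. For this one must first replace $d\,O_0$ by a linearly equivalent divisor spread evenly over the charts, so that the potential of the chosen metric on $\mathcal L$ stays bounded as $n\to\infty$; and the constant produced by the sub-mean-value step must then be tracked with enough care that it comes out exactly as $\tfrac{C_1e^{4C_0}}{2C_2}$. Keeping the metric on $\mathcal L$ aligned with the local frames throughout --- so that the prime-form factors in the determinant identity genuinely reproduce $g(\cdot,\cdot)$ up to the error $C_0$ --- is the delicate bookkeeping that makes the constants emerge in the stated shape.
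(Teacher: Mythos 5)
Your proposal shares the paper's broad architecture (a determinant of sections of a degree-$\approx n$ bundle, a Fay-type theta/prime-form identity for it, Hadamard's inequality, and an averaging step that produces $H(X)$), but two of its key steps are not valid as stated. First, the identity $\int_X\log\|\theta\|\bigl(v-I(y)\bigr)\mu(y)=H(X)$ is false: Poincar\'e's formula computes integrals of \emph{closed} forms over $I(X)$ via the dual class $\nu^{g-1}/(g-1)!$, whereas $\log\|\theta\|(v-\cdot)\cdot\nu$ is not closed, so the integral of $\log\|\theta\|$ over the curve $I(X)\subset J$ against $\mu$ cannot be converted into its average over all of $J$; it is a genuinely different, $v$-dependent invariant. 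The paper obtains $H(X)$ legitimately by averaging over the auxiliary line bundle $\mathcal L\in J_{g-1}$ against $\nu^g/g!$, i.e.\ over the whole Jacobian, where translation invariance does apply. Second, your normalization constant $\gamma(\mathcal L)$ for the determinant of an $L^2(\mu)$-orthonormal basis is essentially the discrepancy between the $L^2$-metric and the Faltings metric on $\det H^0(X,\mathcal L)$ (an analytic-torsion term, together with $\exp(\delta(X)/8)$); asserting that "a parallel computation" identifies it with $\tfrac1gH(X)$ plus genus terms is not a proof, and bounding it uniformly in $n$ is a hard problem in its own right. The paper sidesteps this entirely by working with the interpolation basis $t_j$ adapted to auxiliary points $y_1,\dots,y_n$ and computing the Faltings volume of one and the same parallelotope twice --- once for generic $x_j$, once at $x_j=y_j$ --- so that $\delta(X)$ and all metric normalizations cancel identically in the resulting identity (\ref{equ_fay}), leaving no undetermined constant.

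The quantitative endgame also cannot produce the stated constants the way you describe. In a Bergman-kernel/sup-norm estimate the chart data $C_0$, $C_1$, $C_2$, $r_2-r_1$ enter additively inside a single logarithm (one gets roughly $\tfrac N2\log(\mathrm{const}\cdot n)$), so they cannot emerge as the multiplicative coefficient $\tfrac{C_1e^{4C_0}}{2C_2}$ of $\log n+1$. In the paper that coefficient arises from a different mechanism: after the (weak, column-sum) Hadamard bound and the normalization $\int_Xg(y,x)\mu(y)=0$, one is left with $n\log n-\sum_k\int_{X^n}\min_jg(y_j,x_k)\,\mu(y_1)\cdots\mu(y_n)$, and the latter integral $A_n(x)$ is rewritten via the distribution function $f(y)=\int_{\{g(\cdot,x)\ge g(y,x)\}}\mu$ and bounded by the explicit radial integral $-nC_1\int_0^{r_2-r_1}4\pi t(1-2\pi C_2e^{-4C_0}t^2)^{n-1}\log t\,dt$, whose evaluation yields exactly $\tfrac{C_1e^{4C_0}}{2C_2}(1+\log n)$. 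As written, your proposal therefore has no complete path to the inequality of the theorem: you would either have to carry out the Bergman-kernel route in full and accept constants of a different shape, or replace that step by the paper's computation of $A_n(x)$.
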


Our proof is completely different to Faltings' and Elkies' method, which uses the convolution of the heat kernel. Instead, our proof starts where Faltings applies his bound of the average. We will consider the line bundles
$$\mathcal{M}=\mathcal{L}\otimes \mathcal{O}_X\left(\sum_{j=1}^n y_j\right) \quad \text{and}\quad \mathcal{N}=\mathcal{M}\otimes\mathcal{O}_X\left(-\sum_{j=1}^n x_j\right)$$
for a general admissible metrized line bundle $\mathcal{L}$ of degree $g-1$ on $X$ and general points $x_1,\dots,x_n\in X$ and $y_1,\dots,y_n\in X$. We will compute the Faltings volume of a certain parallelotope in $H^0(X,\mathcal{M})$ in two different ways: First for the general situation and second in the special situation where $x_j=y_j$ for all $j$. This will give us an explicit equation for $\sum_{j\neq k}^n g(x_j,x_k)$ in terms of the norm of the $\theta$-functions $\|\theta\|$ in $\mathcal{L}$ and $\mathcal{N}$ and a determinant, which can be estimated by Hadamard's inequality by further Arakelov--Green functions and $\theta$-functions. Integrating with respect to $\mathcal{L}$ and to the points $y_1,\dots,y_n$ we will reduce the statement of the theorem to upper bounds of $\|\theta\|$ and of the integral
$$A_n(x)=-\int_{X^n}\min_{1\le j\le n} g(y_j,x)\mu(y_1)\dots\mu(y_n).$$
While upper bounds of $\|\theta\|$ are already known, we compute a bound of $A_n(x)$ using the local property $g(y,x)=\log|z(y)-z(x)|+C^{\infty}$ of the Arakelov--Green function for a local coordinate function $z$.

The constant $C_0$ has been bounded more explicitly by Merkl \cite{Mer11} with some improvements by Bruin \cite{Jav14}. Let us recall from \cite[Definition 3.1.1]{Jav14} that a \emph{Merkl atlas} on a compact and connected Riemann surface $X$ of genus $g\ge 1$ is a quadruple $(\{(U_j,z_j)\}_{j=1}^m,r_1,M, C_1)$ where $\{(U_j,z_j)\}_{j=1}^m$ is an atlas of $X$ and $\frac{1}{2}<r_1<1$, $M\ge 1$, and $C_1>0$ are real numbers such that
\begin{enumerate}[(i)]
	\item $z_j(U_j)=\mathbb{D}_1$ for all $j\in \{1,\dots, m\}$,
	\item $\bigcup_{j=1}^m z_j^{-1}(\mathbb{D}_{r_1})=X$,
	\item $\sup_{x\in U_j\cap U_k}|dz_j(x)/dz_k(x)|\le M$ for all $j,k\in\{1,\dots,m\}$,
	\item $\mu\le C_1idz_jd\overline{z}_j$ for all $j\in\{1,\dots,m\}$.
\end{enumerate}
Merkl and Bruin \cite[Theorem 3.1.2]{Jav14} proved that if $(\{(U_j,z_j)\}_{j=1}^m,r_1,M, C_1)$ is a Merkl atlas, then
\begin{align*}
	&\max_{1\le j\le m}\sup_{P,Q\in U_j, P\neq Q}\left|\log|z_j(P)-z_j(Q)|-g(P,Q)\right|\\
	&\le -\frac{330m}{(1-r_1)^{3/2}}\log(1-r_1)+13.2mC_1+(m-1)\log M.\nonumber
\end{align*}
Let us call $(\{(U_j,z_j)\}_{j=1}^m,r_1,M, C_1, C_2)$ an \emph{extended Merkl atlas} if the quadruple $(\{(U_j,z_j)\}_{j=1}^m,r_1,M, C_1)$ is a Merkl atlas and $C_2>0$ is a real number such that
\begin{enumerate}
	\item[(v)] $\mu\ge C_2idz_jd\overline{z}_j$ for all $j\in\{1,\dots,m\}$.
\end{enumerate}
This implies $C_2\le \frac{1}{2\pi}$ as $\int_{\mathbb{D}_1}idzd\overline{z}=2\pi$ and $\int_{X}\mu=1$.
If we combine Theorem \ref{thm_elkies} with the result by Merkl and Bruin, we get the following corollary.
\begin{Cor}\label{cor_elkies}
	Let $X$ be a compact and connected Riemann surface of genus $g\ge 1$ and $(\{(U_j,z_j)\}_{j=1}^m,r_1,M, C_1, C_2)$ an extended Merkl atlas on $X$. For any $n\ge 1$ and any pairwise different points $x_1,\dots,x_n\in X$ it holds
	\begin{align*}
		&\sum_{j<k}^n g(x_j,x_k)\\
		&\le n\left(\left(\frac{C_1e^{4C_0}}{2C_2}+1\right)(\log n+1)+\tfrac{3}{2}g\log g+3-\tfrac{g+1}{g}H(X)+C_0-\log(1-r_1)\right).
	\end{align*}
	with $C_0=-\frac{330m}{(1-r_1)^{3/2}}\log(1-r_1)+13.2mC_1+(m-1)\log M$.
\end{Cor}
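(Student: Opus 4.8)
The plan is to obtain Corollary \ref{cor_elkies} as an immediate consequence of Theorem \ref{thm_elkies}, applied to the atlas $\{(U_j,z_j)\}_{j=1}^m$ of the given extended Merkl atlas with the choice $r_2=1$. First I would check that all hypotheses of Theorem \ref{thm_elkies} are met: axiom (i) of a Merkl atlas gives $z_j(U_j)=\mathbb{D}_1=\mathbb{D}_{r_2}$; axiom (ii) gives $X=\bigcup_{j=1}^m z_j^{-1}(\mathbb{D}_{r_1})$; since $\tfrac12<r_1<1$ we have $r_2-r_1=1-r_1<\tfrac12\le 1$, so in particular $r_2>r_1>0$; and axioms (iv) and (v) furnish exactly the pointwise inequalities $C_2\,idz_jd\overline{z}_j\le\mu\le C_1\,idz_jd\overline{z}_j$ on each $U_j$.

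It then remains to verify the last hypothesis $C_2\le e^{4C_0}/(2\pi)$, where here $C_0$ temporarily denotes the exact quantity $\sup_{j}\sup_{P\neq Q}\bigl|\log|z_j(P)-z_j(Q)|-g(P,Q)\bigr|$ occurring in Theorem \ref{thm_elkies}. Being a supremum of absolute values it is non-negative, so $e^{4C_0}/(2\pi)\ge 1/(2\pi)$; and $C_2\le 1/(2\pi)$ was recorded just before the statement of the corollary, as a consequence of axiom (v) together with $\int_X\mu=1$. Hence Theorem \ref{thm_elkies} applies and yields the asserted inequality, with $-\log(r_2-r_1)=-\log(1-r_1)$ and with this exact $C_0$ in place of the explicit Merkl--Bruin expression.

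Finally I would invoke the estimate of Merkl and Bruin \cite[Theorem 3.1.2]{Jav14}, which bounds precisely this exact $C_0$ by $-\tfrac{330m}{(1-r_1)^{3/2}}\log(1-r_1)+13.2mC_1+(m-1)\log M$. Since the right-hand side of the inequality in Theorem \ref{thm_elkies} is monotonically increasing in $C_0$ --- it enters only through the terms $\tfrac{C_1e^{4C_0}}{2C_2}(\log n+1)$ and $+C_0$, both with positive coefficients --- replacing the exact $C_0$ by this larger explicit quantity only weakens the bound, which gives exactly the statement of the corollary.

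I do not anticipate a genuine obstacle here: the substantive content lies entirely in Theorem \ref{thm_elkies} and in the cited Merkl--Bruin bound, and the deduction is just a verification of hypotheses together with a monotonicity observation. The one point deserving a little care is the double role of the symbol $C_0$: the value used when checking the hypothesis $C_2\le e^{4C_0}/(2\pi)$ and when applying the theorem is the exact supremum, while in the final displayed bound one is free to enlarge it to the closed-form Merkl--Bruin quantity.
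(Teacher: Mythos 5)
Your proposal is correct and follows exactly the route the paper intends: the paper's ``proof'' of the corollary is simply the remark that it follows from Theorem \ref{thm_elkies} combined with the Merkl--Bruin bound, as sketched in the introduction. Your verification of the hypotheses (in particular $C_2\le 1/(2\pi)\le e^{4C_0}/(2\pi)$ since $C_0\ge 0$) and your explicit monotonicity observation justifying the replacement of the exact supremum by the Merkl--Bruin upper bound fill in precisely the details the paper leaves implicit.
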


In a recent work, Looper, Silverman, and the author \cite{LSW21} applied the analogue of the Faltings--Elkies bound for polarized metrized graphs by Baker--Rumely \cite[Proposition 13.7]{BR07} to prove an explicit version of the Bogomolov conjecture over function fields. The technique of proof would also apply to the number field case if explicit constants for the Faltings--Elkies bound were known. In \cite{EdJ11} Edixhoven and de Jong computed a Merkl atlas and the asymptotics of the constants $m$, $r_1$, $M$ and $C_1$ for the modular curve $X_1(pl)$ for two different primes $p$ and $l$. Thus, by Theorem \ref{thm_elkies} and the methods in \cite{LSW21} an asymptotic bound for $C_2^{-1}$ would lead to an asymptotic bound for the number of small height of $X_1(pl)$.

\subsection*{Outline}
In Section \ref{sec_riemannsurfaces} we recall the definitions of the Arakelov--Green function $G$, of the Riemann $\theta$-function and its norm $\|\theta\|$, and of the invariant $H(X)$. In Section \ref{sec_metrics} we deduce a relation between $G$ and $\|\theta\|$ by studying canonical metrics on the determinant of cohomology. Finally, we prove Theorem \ref{thm_elkies} in Section \ref{sec_greenbound}. 

\section{Arakelov invariants on Riemann surfaces}\label{sec_riemannsurfaces}
In this section we give the definitions of some invariants associated to Riemann surfaces motivated by Arakelov theory. For details we refer to \cite{Wil17}.
Let $X$ be any compact and connected Riemann surface of genus $g\ge 1$.
First, we choose a symplectic basis of homology $A_1,\dots,A_g,B_1,\dots B_g\in H_1(X,\mathbb{Z})$ and a basis of one forms $\omega_1,\dots,\omega_g\in H^0(X,\Omega_X^1)$ such that $\int_{A_j}\omega_i=\delta_{ij}$ for the Kronecker symbol $\delta_{ij}$. The entries of the period matrix $\Omega$ of $X$ are given by $\Omega_{ij}=\int_{B_i}\omega_j$. We denote its imaginary part by $Y=\mathrm{Im}~\Omega$. The Jacobian variety $J$ of $X$ is defined by $J=\mathbb{C}^g/(\mathbb{Z}^g+\Omega\mathbb{Z}^g)$.
For a base point $x_0\in X$ we denote the Abel--Jacobi embedding $I_{x_0}\colon X\to J,~x\mapsto \left(\int_{x_0}^x\omega_1,\dots,\int_{x_0}^x\omega_g\right)$.

On $J$ we have the canonical $(1,1)$ form 
$$\nu=\tfrac{i}{2}\sum_{j,k=1}^g\left(Y^{-1}\right)_{jk}dZ_j\wedge d\overline{Z}_k,$$
where $Z_1,\dots, Z_g$ are coordinates in $\mathbb{C}^g$. The form $\nu$ is translation-invariant, positive, and its associated volume form $\nu^g$ has volume $\int_J\nu^g=g!$. Further, it induces the canonical form $\mu=\frac{1}{g}I_{x_0}^*\nu$ on $X$, which is independent of $x_0$ and has volume $\int_X\mu=1$. Note that $\mu$ is also a positive form. The \emph{Arakelov--Green function} is the unique function $G\colon X^2\to \mathbb{R}_{\ge 0}$, such that $G^2$ is $C^{\infty}$ and it holds 
\begin{align}\label{equ_green}
	\partial_x\overline{\partial}_x\log G(x,y)^2=2\pi i(\mu(x)-\delta_y(x))\quad  \text{and}\quad \int_X\log G(x,y)\mu(x)=0.
\end{align}
We write $g(x,y)=\log G(x,y)$ for its logarithm. We remark that $g(x,y)$ is superharmonic as a function of $x$, as $\mu$ is positive.
We define the norm of the Riemann $\theta$-function $\|\theta\|\colon J\to \mathbb{R}_{\ge 0}$ by
$$\|\theta\|(z)=\det(Y)^{1/4}\exp(-\pi\ltrans{y}Y^{-1}y)\left|\sum_{n\in\mathbb{Z}^g}\exp(\pi\ltrans{n}\Omega n+2\pi\ltrans{n}z)\right|,$$
where we think of $z\in J$ as a vector in $\mathbb{C}^g$ representing $z$ and we set $y=\mathrm{Im}~z$. 
We denote $J_{g-1}=\mathrm{Pic}^{g-1}(X)$ for the variety of line bundles on $X$ of degree $g-1$ and $\Theta\subseteq J_{g-1}$ for the divisor consisting of the line bundles $\mathcal{L}$, such that $H^0(X,\mathcal{L})\neq 0$. Then there exists a theta characteristic $\kappa$, such that $\|\theta\|(\mathcal{L}-\kappa)=0$ if and only if $\mathcal{L}\in\Theta$. In the following we shortly write $\|\theta\|(\mathcal{L})$ for $\|\theta\|(\mathcal{L}-\kappa)$ for any line bundle $\mathcal{L}\in J_{g-1}$. Moreover, we set $\|\theta\|(D):=\|\theta\|(\mathcal{O}_X(D))$ if $D$ is any divisor of degree $0$ or $g-1$ on $X$. Furthermore, we also denote $\nu$ for the canonical 1-1 form on $J_{g-1}$ obtained by the pullback of $\nu$ along the translation $J_{g-1}\to J,~\mathcal{L}\mapsto\mathcal{L}-\kappa$.

We define $H(X)=\int_{J}\log\|\theta\|(z)\tfrac{\nu^g(z)}{g!}$.
As $\nu$ is translation-invariant, we obtain
$$H(X)=\int_{J_{g-1}} \log\|\theta\|(z+z_0)\tfrac{\nu^g(z)}{g!}$$
for any $z_0\in J$.

Finally, we recall bounds for $\|\theta\|$. For any $\rho>0$ we define the number
$$c_{g,\rho}=\log \tfrac{g+2}{2}+\tfrac{g\epsilon_g}{2}\log\tfrac{g+2}{\pi\sqrt{3}}+\tfrac{g(1+\rho)}{4}\log\tfrac{1+\rho}{2\rho},$$
where $\epsilon_g=0$ for $g\le 3$ and $\epsilon_g=1$ for $g\ge 4$. Based on Autissier's bound \cite[Proposition A.1]{Par18} of the $\theta$-function,  
we determined the bound
\begin{align}\label{bound_theta}
	\log\|\theta\|(z)\le -\rho H(X)+c_{g,\rho}
\end{align}
in \cite[Corollary 5.7]{Wil17}. A direct computation shows that $c_{g,1/g}\le\frac{3}{2}g\log g+4$ for all $g\ge 1$.

\section{Admissible metrics}\label{sec_metrics}
In this section we discuss metrics on line bundles on Riemann surfaces. This is based on Faltings work in \cite[Section 3]{Fal84}, see also \cite[Chapter VI.]{Lan88} for details. We will also deduce a new formula relating the Arakelov--Green function $G$ and the norm function $\|\theta\|$, which is motivated by a similar formula by Fay. We continue the notation from the previous section.

Let $\mathcal{L}$ be a line bundle on $X$. By a \emph{hermitian metric} on $\mathcal{L}$ we mean a collection of hermitian metrics $\|\cdot\|(x)$ on $\mathcal{L}|_x$ for every point $x\in X$, such that for any open subset $U\subseteq X$ and any section $s\in H^0(U,\mathcal{L})$ the function
$$\|s\|^2\colon U\to \mathbb{R}_{\ge 0},\quad x\mapsto \|s\|(x)^2$$
is $C^\infty$ on $U$. We call a hermitian metric \emph{admissible} if 
$$\partial\overline{\partial}\log\|s\|^2=2\pi i(\deg\mathcal{L})\mu,$$
on any open subset $U\subseteq X$ and for any invertible section $s\in H^0(U,\mathcal{L})$.
Two different admissible hermitian metrics on a line bundle differ by a harmonic function on $X$ and hence, by a constant factor.
We call a line bundle equipped with an admissible hermitian metric an \emph{admissible line bundle}.
If $\mathcal{L}=\mathcal{O}_X(D)$ is the line bundle induced by an effective divisor $D=\sum_{j=1}^n x_j$ on $X$, we obtain an admissible hermitian metric on $\mathcal{L}$ by setting
$$\|1\|(y)=\prod_{j=1}^n G(x_j,y).$$
Taking tensor products, we also obtain an admissible hermitian metric on $\mathcal{O}_X(D)$ for any divisor $D$ on $X$.

Let $\mathcal{L}$ be any admissible line bundle. Faltings introduced in \cite[Theorem 1]{Fal84} a canonical metric on the determinant of cohomology
$$\lambda(\textbf{R}\Gamma(X,\mathcal{L}))=\bigwedge^{\max}H^0(X,\mathcal{L})\otimes \bigwedge^{\max} H^1(X,\mathcal{L})^{\otimes -1}$$
which is given up to a scalar factor independent of $\mathcal{L}$, which can be fixed by requiring that $\lambda(\textbf{R}\Gamma(X,\Omega_X^1))=\bigwedge^g H^0(X,\Omega_X^1)$ is equipped with the metric induced by the inner product $\langle\omega,\omega'\rangle=\frac{i}{2}\int_X\omega\wedge\omega'$ on $H^0(X,\Omega_X^1)$. If $\mathcal{L}$ is a line bundle of degree $\deg\mathcal{L}=g-1$ satisfying $H^0(X,\mathcal{L})=0$, then we have canonically $\lambda(\textbf{R}\Gamma(X,\mathcal{L}))\cong \mathbb{C}$ and its canonical norm is given by the number $\|\theta\|(\mathcal{L})^{-1}\cdot \exp(-\delta(X)/8)$ for some constant $\delta(X)$ only depending on $X$, see \cite[p.~402]{Fal84}.

Let $\mathcal{L}$ be a line bundle of degree $\deg \mathcal{L}=g-1$ on $X$, such that $H^0(X,\mathcal{L})=0$ and let $y_1,\dots, y_n\in X$ be pairwise different points. We set $$\mathcal{M}=\mathcal{L}\otimes\mathcal{O}_X\left(\sum_{j=1}^n y_j\right).$$
We would like to define an admissible metric on $\mathcal{L}$ and hence, also on $\mathcal{M}$.
Let $s'_1\in H^0(X,\mathcal{L}\otimes\mathcal{O}_X(y_1))$ be a non-zero global section. As $s'_1$ has the same divisor as $\|\theta\|(\mathcal{L}+y_1-x)$ as a function of $x\in X$, we may define an admissible metric on $\mathcal{L}\otimes\mathcal{O}_X(y_1)$ by setting $\|s'_1\|(x)=\|\theta\|(\mathcal{L}+y_1-x)$. This metric is admissible since $\partial\overline{\partial}\log\|\theta\|^2=2\pi i(\nu-\delta_\Theta)$.
If we define $U_i=X\setminus \{y_i\}$, we obtain an induced section $s_1\in H^0(U_1,\mathcal{L})$ and the metric on $\mathcal{L}$ induced by the metric on $\mathcal{L}\otimes\mathcal{O}_X(y_1)$ is given by
$$\|s_1\|(x)=\frac{\|\theta\|(\mathcal{L}+y_1-x)}{G(y_1,x)}.$$
As all admissible metrics differ only by a constant factor, we may also choose sections $s'_j\in H^0(X,\mathcal{L}\otimes\mathcal{O}_X(y_j))$ for $2\le j\le n$, such that we have
$$\|s_j\|(x)=\frac{\|\theta\|(\mathcal{L}+y_j-x)}{G(y_j,x)}$$
for the induced section $s_j\in H^0(U_j,\mathcal{L})$. We take on $\mathcal{M}$ the admissible hermitian metric induced by the product of the admissible hermitian metrics on its factors. By multiplying with the constant section $1$, we may consider $t_j=s'_j\cdot 1\in H^0(X,\mathcal{M})$ also as a section of the admissible line bundle $\mathcal{M}$. The norm of the $t_j$'s are given by
$$\|t_j\|(x)=\|\theta\|(\mathcal{L}+y_j-x)\cdot\prod_{k\neq j}^n G(y_k,x).$$
As the $y_j$'s are pairwise different, it holds $t_j(y_k)\neq 0$ if and only if $j=k$.
Hence, the $t_j$'s form a basis of $H^0(X,\mathcal{M})$.

The following construction is motivated by the construction in the proof of \cite[Theorem 2]{Fal84}.
It follows from the Riemann--Roch theorem, that $H^1(X,\mathcal{L})=0$ and hence, also $H^1(X,\mathcal{M})=0$. Thus, the canonical metric on $\lambda(\textbf{R}\Gamma(X,\mathcal{M}))$ defines a volume form on $H^0(X,\mathcal{M})$. For points $x_1,\dots,x_n\in X$ in general position it holds $H^0(X,\mathcal{N})=0$, where $\mathcal{N}=\mathcal{M}\otimes\mathcal{O}_X\left(-\sum_{j=1}^n x_j\right)$. The volume form on $\lambda(\textbf{R}\Gamma(X,\mathcal{N}))\cong \mathbb{C}$ is given by $\|\theta\|(\mathcal{N})^{-2}\cdot \exp(-\delta(X)/4)$. Note that we get squares for the volume forms as we working over the complex numbers, which are two-dimensional over the real numbers. If the points $x_1,\dots,x_n$ are moreover pairwise different, there is an isomorphism 
\begin{align*}
	\phi\colon H^0(X,\mathcal{M})\cong ~\bigoplus_{j=1}^n\mathcal{M}[x_j].
\end{align*}
We equip $\mathcal{M}[x_j]$ with the hermitian metric induced by the hermitian metric on $\mathcal{M}$ and $\bigoplus_{j=1}^n\mathcal{M}[x_j]$ with the induced hermitian metric for orthogonal sums. As shown in \cite[Lemma V.3.3]{Lan88}, the isomorphism $\phi$ alters volumes by the factor
$$\prod_{j\neq k}^n G(x_j,x_k)\cdot \|\theta\|(\mathcal{N})^2\cdot \exp(\delta(X)/4).$$
Let $f_1,\dots,f_n\in \bigoplus_{j=1}^n\mathcal{M}[x_j]$ be an orthonormal basis and write $A=(a_{jk})\in \mathbb{C}^{n\times n}$ for the base change matrix, such that $\phi(t_j)=\sum_{k=1}^n a_{jk} f_k$. We set $\|\det(t_j(x_k))\|=|\det(A)|$.

Let $P\subseteq H^0(X,\mathcal{M})$ be the parallelotope spanned by $t_1,\dots,t_n, it_1,\dots,it_n$ in $H^0(X,\mathcal{M})$. As the $a_{jk}$'s are the coordinates of the images $\phi(t_j)$ of the $t_j$'s under the isomorpism $\phi$ for a orthonormal basis, the volume of $\phi(P)$ can be computed by
\begin{align*}
	V(\phi(P))&=\left|\det\begin{pmatrix} (\mathrm{Re}~a_{jk}) & (-\mathrm{Im}~a_{jk})\\ (\mathrm{Im}~a_{jk}) & (\mathrm{Re}~a_{jk})\end{pmatrix}\right|\\
	&=|\det(\mathrm{Re}~a_{jk}+i\cdot\mathrm{Im}~a_{jk})\cdot \det(\mathrm{Re}~a_{jk}-i\cdot\mathrm{Im}~a_{jk})|\\
	&=|\det(A)|^2=\|\det(t_j(x_k))\|^2.
\end{align*}
Using this, we compute the volume of $P$ by
\begin{align}\label{equ_volume}
	V(P)=\frac{\|\det(t_j(x_k))\|^2}{\|\theta\|(\mathcal{N})^2\cdot \exp(\delta(X)/4)\cdot\prod_{j\neq k}^n G(x_j,x_k)}.
\end{align}
As $V(P)$ is independent of the choice of the poitns $x_1,\dots,x_n$, we may compute it by setting $x_j=y_j$ for all $1\le j\le n$. Then we get $\mathcal{N}=\mathcal{L}$ and the matrix $(t_j(y_k))$ is just a diagonal matrix such that
$$\|\det(t_j(y_k))\|=\prod_{j=1}^n\left(\|\theta\|(\mathcal{L})\cdot \prod_{k\neq j}G(y_k, y_j)\right).$$
Hence, we obtain for the volume of $P$
$$V(P)=\frac{\|\theta\|(\mathcal{L})^{2n-2}\prod_{j\neq k}^nG(y_j,y_k)}{\exp(\delta(X)/4)}.$$
If we apply this to Equation (\ref{equ_volume}), we obtain
\begin{align}\label{equ_fay}
	\|\theta\|\left(\mathcal{L}+\sum_{i=1}^n(y_i-x_i)\right)\|\theta\|(\mathcal{L})^{n-1}\prod_{j< k}^n G(x_j,x_k)G(y_j,y_k)=\|\det(t_j(x_k))\|
\end{align}
for any general choice of the points $x_1,\dots,x_n\in X$ as above. We remark that Fay \cite[Corollary 2.19]{Fay73} proved a holomorphic version of this equation for the Riemann $\theta$-function and the prime form $E(x,y)$. Note that Equation (\ref{equ_fay}) stays true for any choice of $y_1,\dots,y_n,x_1,\dots,x_n\in X$ and $\mathcal{L}\in J_{g-1}$ by continuity.

\section{Bound of the Arakelov--Green function}\label{sec_greenbound}
We prove Theorem \ref{thm_elkies} in this section. We continue the notation from the previous sections.
We would like to deduce an inequality involving the norm function $\|\theta\|$ and the Arakelov--Green function $G$ from (\ref{equ_fay}). For this purpose, we will use the following lemma, which is a consequence of Hadamard's inequality.
\begin{Lem}
	It holds
	$$\|\det(t_j(x_k))\|\le 
	\prod_{k=1}^n \sum_{j=1}^n \left(\|\theta\|(\mathcal{L}+y_j-x_k)\cdot \prod_{l\neq j}^n G(y_l, x_k)\right).$$
\end{Lem}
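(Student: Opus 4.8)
The plan is to unwind the definition of $\|\det(t_j(x_k))\|$ and then apply Hadamard's inequality to the columns of the base-change matrix. Recall that, by construction, $\|\det(t_j(x_k))\|=|\det A|$, where $A=(a_{jk})\in\mathbb{C}^{n\times n}$ is determined by $\phi(t_j)=\sum_{k=1}^n a_{jk}f_k$ for a fixed orthonormal basis $f_1,\dots,f_n$ of $\bigoplus_{j=1}^n\mathcal{M}[x_j]$. First I would note that, since each fibre $\mathcal{M}[x_k]$ is one-dimensional over $\mathbb{C}$ and the summands are mutually orthogonal, one may take $f_k$ to be a unit vector lying in the $k$-th summand $\mathcal{M}[x_k]$. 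With this choice the isomorphism $\phi$ is the evaluation map $s\mapsto(s(x_1),\dots,s(x_n))$, which is injective because $H^0(X,\mathcal{N})=0$ and an isomorphism by the dimension count $h^0(X,\mathcal{M})=\deg\mathcal{M}-g+1=n$. Hence $t_j(x_k)=a_{jk}f_k$ inside $\mathcal{M}[x_k]$, so that $|a_{jk}|=\|t_j(x_k)\|=\|t_j\|(x_k)$.

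Next I would substitute the formula for the pointwise norm of $t_j$ obtained in Section \ref{sec_metrics}, namely $\|t_j\|(x)=\|\theta\|(\mathcal{L}+y_j-x)\cdot\prod_{l\neq j}^n G(y_l,x)$, evaluated at $x=x_k$. This gives $|a_{jk}|=\|\theta\|(\mathcal{L}+y_j-x_k)\cdot\prod_{l\neq j}^n G(y_l,x_k)$, which is precisely the $(j,k)$ summand appearing on the right-hand side of the lemma.

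Finally, Hadamard's inequality applied to the columns of $A$ gives $|\det A|\le\prod_{k=1}^n\left(\sum_{j=1}^n|a_{jk}|^2\right)^{1/2}$; bounding each Euclidean column norm by its $\ell^1$-norm, which is legitimate as all $|a_{jk}|\ge 0$, yields $\left(\sum_{j=1}^n|a_{jk}|^2\right)^{1/2}\le\sum_{j=1}^n|a_{jk}|$, and combining the two estimates proves the claim. I do not expect a genuine obstacle here: the only step that needs a little care is the identification $|a_{jk}|=\|t_j\|(x_k)$, i.e.\ verifying that the orthogonal-sum metric on $\bigoplus_j\mathcal{M}[x_j]$ restricts to the given fibrewise metrics and that $\phi$ is honestly evaluation at the points $x_j$; everything else is the routine $\ell^2\le\ell^1$ sharpening of Hadamard's bound. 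One could equally expand $\det A$ by the Leibniz formula and bound term by term, but the Hadamard route is cleaner.
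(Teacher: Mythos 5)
Your proof is correct and follows essentially the same route as the paper: identify $|a_{jk}|$ with the pointwise norm $\|t_j\|(x_k)$ via an orthonormal basis adapted to the direct-sum decomposition, then bound $|\det A|$ by the product over columns of the $\ell^1$-norms (the paper invokes this "weak Hadamard" bound directly, while you derive it from the standard Hadamard inequality plus $\ell^2\le\ell^1$). No substantive difference.
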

\begin{proof}
	Let $f_1,\dots,f_n\in\bigoplus_{j=1}^n\mathcal{M}[x_j]$ be an orthonormal basis of $\bigoplus_{j=1}^n\mathcal{M}[x_j]$ respecting its decomposition into its direct summands, that is $f_j\in \mathcal{M}[x_j]$ for any $j\le n$. Further, let $A=(a_{jk})\in \mathbb{C}^{n\times n}$ be the base change matrix such that $\phi(t_j)=\sum_{k=1}^n a_{jk}f_k$. By definition we have $\|\det(t_j(y_k))\|=|\det(A)|$. Note that $|a_{jk}|$ is the norm of $\phi(t_j)$ projected to the factor $\mathcal{M}[x_k]$ of $\bigoplus_{l=1}^n\mathcal{M}[x_l]$ and hence, it holds
	$$|a_{jk}|=\|t_j\|(x_k)=\|\theta\|(\mathcal{L}+y_j-x_k)\cdot \prod_{l\neq j}G(y_l,x_k).$$
	By a weak version of Hadamard's inequality we have $|\det(A)|\le \prod_{k=1}^n\sum_{j=1}^n|a_{jk}|$. Hence, we obtain the inequality in the lemma.
\end{proof}

Applying the lemma to Equation (\ref{equ_fay}), taking logarithm and integrating with respect to $\frac{\nu^g(\mathcal{L})}{g!}\mu(y_1)\dots\mu(y_n)$ we get

\begin{align*}
	&n H(X)+\sum_{j< k}^n g(x_j,x_k)\\
	&\le\sum_{k=1}^n \int_{X^n}\int_{J_{g-1}} \log \sum_{j=1}^n \|\theta\|(\mathcal{L}+y_j-x_k)\cdot\prod_{l\neq j}^n G(y_l,x_k)\tfrac{\nu^g(\mathcal{L})}{g!}\mu(y_1)\dots\mu(y_n).
\end{align*}
As $\int_{X^n}\log \prod_{l=1}^n G(y_l,x_k)\mu(y_1)\dots\mu(y_n)=0$ by the definition of the Arakelov--Green function in (\ref{equ_green}), we obtain by using the bound of the $\theta$-function in (\ref{bound_theta})
\begin{align}\label{equ_estimation}
	&n H(X)+\sum_{j< k}^n g(x_j,x_k)\\
	&\le-\rho nH(X)+nc_{g,\rho}+\sum_{k=1}^n \int_{X^n} \log \sum_{j=1}^n  G(y_j,x_k)^{-1}\mu(y_1)\dots\mu(y_n)\nonumber\\
	&\le-\rho nH(X)+nc_{g,\rho}+n\log n-\sum_{k=1}^n \int_{X^n} \min_{1\le j\le n}  g(y_j,x_k)\mu(y_1)\dots\mu(y_n).\nonumber
\end{align}
To bound the last term, let us in general define for any point $x\in X$
$$A_n(x)=-\int_{X^n}\min_{1\le j\le n} g(y_j, x)\mu(y_1)\dots\mu(y_n).$$
If we denote $M(y)=\{y'\in X~|~g(y',x)\ge g(y,x)\}\subseteq X$ for any $y\in X$ and $f(y)=\int_{M(y)}\mu$ for its volume, we can rewrite $A_n(x)$ as
\begin{align}\label{equ_An}
A_n(x)=-n\int_X g(y,x) f(y)^{n-1}\mu(y).
\end{align}
Note that
$$\int_X  f(y)^{n-1}\mu(y)=\frac{1}{n}$$
is the volume of the subset of $X^n$ consisting of the points $(y_1,\dots,y_n)\in X^n$ such that $g(y_j,x)$ is minimal for $j=1$, which has to be $1/n$ by symmetry.

Let $\{z_j\colon U_j\to \mathbb{C}\}_{j\in J}$ be a family of local coordinates on $X$ as in Theorem \ref{thm_elkies}. In particular, there are positive numbers $r_2>r_1>0$ with $r_2-r_1\le 1$ such that $X=\bigcup_{j\in J} z_j^{-1}(\mathbb{D}_{r_1})$ and $z_j(U_j)=\mathbb{D}_{r_2}$ for all $j\in J$. Let $C_1, C_2>0$ satisfy
$$C_2idz_jd\overline{z}_j\le \mu\le C_1idz_jd\overline{z}_j$$
on $U_j$ for all $j\in J$ and $C_2\le \frac{e^{4C_0}}{2\pi}$.

We fix a point $x\in X$ and a $j_0\in J$ such that $x\in z_{j_0}^{-1}(\mathbb{D}_{r_1})$. We define the coordinate $z(y)=z_{j_0}(y)-z_{j_0}(x)$ and set $U_x=z^{-1}(\mathbb{D}_{r_2-r_1})$. Now we discuss bounds for the Arakelov--Green function.
If $y\in U_{x}$, we get by definition of the constant $C_0$
$$|\log|z(y)|-g(y,x)|\le C_0.$$
Let us also give a lower bound for $g(y,x)$ if $y\notin U_{x}$. Since $g(y,x)$ is a superharmonic function in $y$, it does not have local minima on $X\setminus\{x\}$. Thus, there exists a path from $y$ to $x$, such that $g(\cdot,x)$ is decreasing on this path. This path intersects the loop $Z_{x}=\{w\in U_{j_0}~|~|z(w)|=r_2-r_1\}$ around $x$. In particular, we get
\begin{align*}
-g(y,x)&\le \sup_{w\in Z_{x}} -g(w,x)\\
&=\sup_{w\in Z_{x}} \left(\log|z(w)|-g(w,x)\right)-\log(r_2-r_1)\le C_0-\log(r_2-r_1).
\end{align*}

Our next goal is to find an upper bound for the function $f(y)$ in terms of the local coordinate $z(y)$ if $y\in U_x$. By the bound of the Arakelov--Green function we get the inclusions
\begin{align*}
X\setminus M(y)&=\{y'\in X~|~g(y',x)<g(y,x)\}\supseteq \{y'\in U_x~|~g(y',x)<g(y,x)\} \\
&\supseteq \{y'\in U_x~|~|z(y')|<|z(y)|e^{-2C_0}\}
\end{align*}
Thus,
\begin{align*}
	f(y)&\le 1-C_2\int_{\{y'\in U_x~|~|z(y')|<|z(y)|e^{-2C_0}\}} idzd\overline{z}=1- 2\pi C_2e^{-4C_0}|z(y)|^2.	
\end{align*}
Using Equation (\ref{equ_An}) we can now bound 
\begin{align}\label{equ_bound-A_n}
	A_n(x)\le C_0-\log(r_2-r_1)-nC_1\int_{U_x}\log|z(y)|(1-2\pi C_2e^{-4C_0}|z(y)|^2)^{n-1}idzd\overline{z}.
\end{align}
Next, we calculate the integral
\begin{align*}
	&\int_{U_x}\log|z(y)|(1-2\pi C_2e^{-4C_0}|z(y)|^2)^{n-1}idzd\overline{z}\\
	&=4\pi\int_{0}^{r_2-r_1}t(1-2\pi C_2e^{-4C_0}t^2)^{n-1}\log tdt\\
	&\ge 4\pi\sum_{j=0}^{n-1}\tbinom{n-1}{j}(-2\pi C_2e^{-4C_0})^j\int_{0}^{1} t^{2j+1}\log tdt\\
	&=-\pi\sum_{j=0}^{n-1}\tbinom{n-1}{j}(-2\pi C_2e^{-4C_0})^j\frac{1}{(j+1)^2}\\
	&=\frac{e^{4C_0}}{2C_2n}\sum_{j=1}^n\frac{\tbinom{n}{j}(-2\pi C_2e^{-4C_0})^{j}}{j},
\end{align*}
Note that the function 
$$a_n(x)=\sum_{j=1}^n\frac{\tbinom{n}{j}(-x)^j}{j}$$
has the derivative
$$a_n'(x)=\frac{1}{x}\sum_{j=1}^n\tbinom{n}{j}(-x)^j=\frac{1}{x}((1-x)^n-1).$$
Thus, $a_n(x)$ is decreasing for $0\le x\le 1$, such that $a_n(x)\ge a_n(1)=-\sum_{j=1}^n\frac{1}{j}$ for $x\in [0,1]$. As $\sum_{j=1}^n\frac{1}{j}\le 1+\int_1^n\frac{1}{x}dx=1+\log n$, we can bound
$$\sum_{j=1}^n\frac{\tbinom{n}{j}(-2\pi C_2e^{-4C_0})^{j}}{j}\ge -1-\log n^.$$
Thus, we conclude
\begin{align*}
	&\int_{U_x}\log|z(y)|(1-2\pi C_2e^{-4C_0}|z(y)|^2)^{n-1}idzd\overline{z}\ge -\frac{e^{4C_0}}{2C_2n}(1+\log n).
\end{align*}
If we apply this to inequality (\ref{equ_bound-A_n}), we finally get
$$A_n(x)\le C_0-\log(r_2-r_1)+\frac{C_1e^{4C_0}}{2C_2}(1+\log n).$$
Putting this into the computation in (\ref{equ_estimation}) we obtain
\begin{align*}
	&\sum_{j<k}^n g(x_j,x_k)\\
	&\le n\left(\left(\frac{C_1e^{4C_0}}{2C_2}+1\right)(\log n+1)-1+c_{g,\rho}-(1+\rho)H(X)+C_0-\log(r_2-r_1)\right).
\end{align*}
If we set $\rho=1/g$ and use $c_{g,1/g}\le \frac{3}{2}g\log g+4$, we obtain 
\begin{align*}
	&\sum_{j<k}^n g(x_j,x_k)\\
	&\le n\left(\left(\frac{C_1e^{4C_0}}{2C_2}+1\right)(\log n+1)+\tfrac{3}{2}g\log g+3-\tfrac{g+1}{g}H(X)+C_0-\log(r_2-r_1)\right).
\end{align*}
Thus, we have proved Theorem \ref{thm_elkies}. Corollary \ref{cor_elkies} follows directly from the arguments in the introduction.

\end{document}